 \numberwithin{equation}{section}
\newcommand{\abs}[1]{\lvert#1\rvert}
\newcommand{\argmin}[1]{\mathop{\rm argmin}\limits_{#1}}
\newcommand{\A}{{\mathcal A}}
\newcommand{\E}{{\mathbb E}}
\newcommand{\R}{{\mathbb R}}
\newcommand{\HH}{{\mathbb F}}
\newcommand{\C}{{\mathbb C}}
\newcommand{\CU}{{\mathcal U}}
\newcommand{\vx}{{\mathbf x}}
\newcommand{\vy}{{\mathbf y}}
\newcommand{\vu}{{\mathbf u}}
\newcommand{\va}{{\mathbf a}}
\newcommand{\innerp}[1]{\langle #1 \rangle}
\newcommand{\F}{{\mathbb F}}
\newcommand{\Z}{{\mathbb Z}}
\newcommand{\diag}{{\rm diag}}
\renewcommand{\omega}{\eta}
\newcommand{\RNum}[1]{\uppercase\expandafter{\romannumeral #1\relax}}
\newtheorem{definition}{Definition}[section]
\newtheorem{corollary}[definition]{Corollary}
\newtheorem{theorem}[definition]{Theorem}
\newtheorem{lemma}[definition]{Lemma}
\date{}
\begin{document}
\baselineskip 18pt
\bibliographystyle{plain}
\title{The recovery of complex sparse signals from few phaseless measurements }
\author{Yu Xia}
\thanks{Yu Xia was supported by NSFC grant (11901143), Zhejiang Provincial Natural Science Foundation (LQ19A010008), Education Department of Zhejiang Province Science Foundation (Y201840082). }
\address{Department of Mathematics,
Hangzhou Normal University, Hangzhou 311121, China
}
\email{yxia@hznu.edu.cn}

\author{Zhiqiang Xu}
\thanks{Zhiqiang Xu was supported  by NSFC grant (91630203, 11688101),
Beijing Natural Science Foundation (Z180002).
}
\address{LSEC, Inst.~Comp.~Math., Academy of
Mathematics and System Science,  Chinese Academy of Sciences, Beijing, 100091, China
\newline
School of Mathematical Sciences, University of Chinese Academy of Sciences, Beijing 100049, China}
\email{xuzq@lsec.cc.ac.cn}
\begin{abstract}
We study the stable  recovery of complex $k$-sparse signals from as few phaseless
measurements as possible. The main result is to show that  one can employ $\ell_1$
minimization to  stably recover complex $k$-sparse signals from $m\geq O(k\log
(n/k))$   complex Gaussian random quadratic measurements with high probability. To do
that, we establish that Gaussian random measurements  satisfy the
 restricted isometry property over rank-$2$ and sparse matrices with high probability.
 This paper presents the first  theoretical estimation of the measurement number
for stably recovering complex sparse signals  from   complex Gaussian quadratic
measurements.
\end{abstract}
\maketitle
\section{Introduction}
\subsection{Compressive Phase Retrieval }
Suppose that $\vx_0\in \HH^n$ is a  $k$-sparse signal, i.e., $\|\vx_0\|_0\leq k$,
where  $\HH\in \{\R,\C\}$. We are interested in recovering $\vx_0$ from
\[
y_j=\abs{\innerp{\va_j,\vx_0}}^2+w_j,\quad j=1,\ldots,m,
\]
where $\va_j\in \HH^n$ is a measurement vector and $w_j\in \R$ is the noise.
This problem is called {\em compressive phase retrieval } \cite{CPR07}.
  To state conveniently, we let $\A: \F^{n\times n}\rightarrow \R^m$ be a linear map which is defined as
\[
\A(X)=(\va_1^*X\va_1,\ldots,\va_m^*X\va_m),
\]
where $X\in \F^{n\times n}, \va_j\in \F^n, j=1,\ldots,m$. We abuse the notation and
set
\[
\A(\vx):=\A(\vx\vx^*)=(\abs{\innerp{\va_1,\vx}}^2,\ldots,\abs{\innerp{\va_m,\vx}}^2),
\]
where $\vx\in \HH^n$.
We also set
\[
\tilde{\vx}_0\,\,:=\,\, \{c \vx_0: \abs{c}=1, c\in \HH\}.
\]
The aim of compressive phase retrieval is to recover $\tilde{\vx}_0$ from noisy
measurements $\mathbf{y}=\A(\vx_0)+\mathbf{w}$, with
$\mathbf{y}=(y_1,\ldots,y_m)^T\in \R^m$ and $\mathbf{w}=(w_1,...,w_m)^T\in \R^m$. One
question in compressive phase retrieval is: {\em how many measurements $y_j,
j=1,\ldots,m$, are needed to stably recover $\tilde{\vx}_0$?} For the case $\HH=\R$,
 in \cite{EM}, Eldar and Mendelson established that $m=O(k\log (n/k))$ Gaussian random quadratic
 measurements are enough to stably recover $k$-sparse signals $\tilde{\vx}_0$.
 For the complex case, Iwen,  Viswanathan and  Wang suggested a two-stage strategy for
  compressive phase retrieval and show that $m=O(k\log (n/k))$ measurements can guarantee
   the stable recovery of $\tilde{\vx}_0$ \cite{Iwen2015Robust}. However, the strategy
   in \cite{Iwen2015Robust} requires the measurement matrix  to be written as a
   product    of two random matrices. Hence, it still remains open whether one can  stably recover
    arbitrary complex $k$-sparse signal $\tilde{\vx}_0$ from $m=O(k\log (n/k))$  Gaussian random
     quadratic measurements. One of the aims of this paper is to confirm  that
     $m=O(k\log (n/k))$ Gaussian random quadratic measurements are enough to guarantee the
      stable recovery of arbitrary  complex $k$-sparse signals. In fact,
we do so by  employing $\ell_1$ minimization.

\subsection{$\ell_1$ minimization }
Set $A:=(\va_1,\ldots,\va_m)^T\in \HH^{m\times n}$.  One classical result in
compressed sensing is that one can use $\ell_1$ minimization to recover $k$-sparse
signals, i.e.,
\[
\argmin{\vx\in \HH^n} \{\|\vx\|_1 : A\vx=A\vx_0\}=\vx_0,
\]
provided that the measurement matrix $A$ meets the RIP condition \cite{CJT}. Recall
that a matrix $A$ satisfies the $k$-order RIP condition with RIP constant
$\delta_k\in [0,1)$ if
\[
(1-\delta_k)\|\mathbf{x}\|_2^2\leq \|A\mathbf{x}\|_2^2\leq (1+\delta_k)\|\mathbf{x}\|_2^2
\]
holds for all $k$-sparse vectors $\mathbf{x}\in \mathbb{F}^n$. Using tools from
probability theory, one can show that Gaussian random matrix satisfies $k$-order RIP
with high probability provided $m=O(k\log (n/k))$ \cite{BDDW08}.

Naturally, one is interested in employing $\ell_1$ minimization for compressive phase retrieval.
% A naive model for recovering $\tvx_0$ from $\A(\vx_0)$ is
%\begin{equation}\label{eq:l0model}
%\argmin{\vx\in \HH^n}\{\|\vx\|_0: \A(\vx)=\A(\vx_0)\}.
%\end{equation}
%In \cite{WX14}, Wang and Xu prove that the solution to (\ref{eq:l0model}) is $\tvx_0$ if $A$ contains $m\geq 2k-1$ (resp. $m\geq 4k-2$) generic vectors in $\R^n$ (resp. in $\C^n$).
We consider the following model:
\begin{equation}\label{eq:l1model}
\argmin{\vx\in \HH^n}\{\|\vx\|_1: \abs{A\vx}=\abs{A\vx_0}\}.
\end{equation}
Although the constrained conditions in (\ref{eq:l1model}) is non-convex, the model
(\ref{eq:l1model}) is more amenable to algorithmic recovery. In fact, one already
develops many algorithms for solving (\ref{eq:l1model}) \cite{alg1,alg2}. For the
case $\HH=\R$, the performance of (\ref{eq:l1model}) was studied in
\cite{VX16,GWX,WX14,Lu19}. Particularly, in \cite{VX16}, it was shown that if $A\in
\R^{m\times n}$ is a random Gaussian matrix with  $m=O(k\log (n/k))$, then
\[
\argmin{\vx\in \R^n}\{\|\vx\|_1: \abs{A\vx}=\abs{A\vx_0}\}\,\,=\,\,\pm\vx_0
\]
holds with high probability. The methods developed in \cite{VX16} heavily depend on
$A\vx_0$ is a {\em real} vector and one still does not know the  performance of
$\ell_1$ minimization for recovering complex sparse signals. As mentioned in
\cite{VX16}: ``{\em The extension of these results to hold over $\C$ cannot follow
the same line of reasoning}". In this paper, we extend the result in \cite{VX16} to
complex case with employing the new idea about the RIP of quadratic measurements.

\subsection{Our contribution }
In this paper, we study the performance of $\ell_1$ minimization for recovering complex sparse signals from phaseless measurements $\mathbf{y}=\mathcal{A}(\mathbf{x}_0)+\mathbf{w}$, where $\|\mathbf{w}\|_2\leq \epsilon$. Particularly, we focus on the model
\begin{equation}\label{eq:complexL1}
\min_{\vx\in \C^n} \|\vx\|_1 \quad {\rm s.t.}\quad \|\A(\vx)-\A(\vx_0)\|_2\leq \epsilon.
\end{equation}
Our main idea  is to lift  (\ref{eq:complexL1}) to recover  rank-one and sparse matrices, i.e.,
 \[
\min_{X\in {\mathbb H}^{n\times n}} \|X\|_1\quad {\rm s.t.}\quad \|\A(X)-\vy\|_2\leq \epsilon, \  {\rm rank}(X)=1.
\]
Throughout this paper, we use ${\mathbb H}^{n\times n}$ to denote the $n\times n$
Hermitian matrices set.
 Thus we require $\A$ satisfies restricted isometry property over low-rank and sparse matrices:
\begin{definition}
We say that the  map $\A:{\mathbb H}^{n\times n}\rightarrow \R^m$ satisfies the
restricted isometry property of order $(r,k)$ if there exist positive constants $c$
and $C$ such that the inequality
\begin{equation}\label{de:RIP}
c\|X\|_{F}\leq\frac{1}{m}\|\mathcal{A}(X)\|_{1}\leq C\|X\|_{F}
\end{equation}
holds for all $X\in \mathbb{H}^{n\times n}$ with $\text{rank}(X)\leq r$ and
$\|X\|_{0,2}\leq k$.
\end{definition}
Throughout this paper, we use $\|X\|_{0,2}$ to denote the number of non-zero rows in
$X$. Since $X$ is Hermitian, we have $\|X\|_{0,2}=\|X^*\|_{0,2}$. We next show that
Gaussian random map $\A$ satisfies RIP of order $(2,k)$ with high probability
provided $m\gtrsim k\log (n/k)$.
\begin{theorem}\label{thm: RIP}
Assume that the linear measurement $\mathcal{A}(\cdot)$ is defined as
\[
\mathcal{A}(X)=(\va_1^*X\va_1,\ldots,\va_m^*X\va_m),
\]
with  $\va_j$ independently taken as complex Gaussian random vectors, i.e.,
$\va_j\sim\mathcal{N}(0, \frac{1}{2}\mathbf{I}_{n\times
n})+\mathcal{N}(0,\frac{1}{2}\mathbf{I}_{n\times n})i$. If
\[
 m\gtrsim k\log (n/k),
 \]
 under the probability at least $1-2\exp(-c_0m)$, the linear map $\mathcal{A}$
satisfies the restricted isometry property  of order $(2,k)$, i.e.
\[
0.12\|X\|_{F}\leq\frac{1}{m}\|\mathcal{A}(X)\|_{1}\leq 2.45\|X\|_{F},
\]
for all $X\in \mathbb{H}^{n\times n}$ with $\text{rank}(X)\leq 2$ and
$\|X\|_{0,2}\leq k$ (also $\|X^{*}\|_{0,2}\leq k$).
\end{theorem}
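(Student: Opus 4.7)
The plan is to run a classical $\epsilon$-net argument in three stages: (i) for a fixed $X$, compute $\mathbb{E}|\mathbf{a}^* X \mathbf{a}|$ and bracket it between universal constant multiples of $\|X\|_F$; (ii) establish sub-exponential concentration of $\tfrac{1}{m}\|\mathcal{A}(X)\|_1$ around that expectation; (iii) bound the covering number of $S_{k,2} := \{X \in \mathbb{H}^{n\times n} : \mathrm{rank}(X) \leq 2,\ \|X\|_{0,2} \leq k,\ \|X\|_F = 1\}$ and take a union bound. By homogeneity I may assume $\|X\|_F = 1$. For step (i), I would write $X = \lambda_1 u_1 u_1^* + \lambda_2 u_2 u_2^*$ with real $\lambda_i$ and orthonormal $u_i$; unitary invariance of the complex Gaussian then gives $\mathbf{a}^* X \mathbf{a} \stackrel{d}{=} \lambda_1 Y_1 + \lambda_2 Y_2$ with $Y_1,Y_2$ independent $\mathrm{Exp}(1)$. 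A direct case analysis on the signs of $\lambda_1,\lambda_2$ yields universal constants $\mu_-,\mu_+$ with $\mu_- \leq \mathbb{E}|\mathbf{a}^*X\mathbf{a}|/\|X\|_F \leq \mu_+$, the extremes arising at $\lambda_1 = -\lambda_2$ (lower, $\mu_- = 1/\sqrt{2}$) and $\lambda_1 = \lambda_2$ (upper, $\mu_+ = \sqrt{2}$).

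\textbf{Pointwise concentration and covering.} For step (ii), each $|\mathbf{a}_j^* X \mathbf{a}_j|$ is sub-exponential with $\psi_1$-norm $\lesssim \|X\|_F$, so Bernstein's inequality yields
\[
\Pr\Bigl(\bigl|\tfrac{1}{m}\|\mathcal{A}(X)\|_1 - \mathbb{E}|\mathbf{a}^*X\mathbf{a}|\bigr| > t\|X\|_F\Bigr) \leq 2\exp(-c_1 m\min(t^2,t)).
\]
For step (iii), the set $S_{k,2}$ is a union over the $\binom{n}{k}$ possible row-supports $T \subseteq [n]$; each piece lies in the set of rank-$\leq 2$ unit-Frobenius Hermitian matrices in $\mathbb{C}^{k\times k}$, which is a real manifold of dimension $O(k)$ admitting an $\epsilon$-net of size $(C/\epsilon)^{Ck}$. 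Consequently the total net cardinality satisfies $|\mathcal{N}_\epsilon| \leq \binom{n}{k}(C/\epsilon)^{Ck} \leq \exp(C'k\log(n/k))$ for fixed $\epsilon$, so the choice $m \gtrsim k\log(n/k)$ lets the Bernstein bound hold uniformly on $\mathcal{N}_\epsilon$ after a union bound, with failure probability $2\exp(-c_0m)$.

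\textbf{Extension to $S_{k,2}$ and main difficulty.} The final ingredient is to promote the net-level bound to every $X \in S_{k,2}$. I would first prove an unconditional upper bound $\tfrac{1}{m}\|\mathcal{A}(Y)\|_1 \leq C''\|Y\|_F$ uniformly over $\mathrm{rank}(Y) \leq 4$, $\|Y\|_{0,2} \leq 2k$; this is the easy side, since $|\mathbf{a}^*Y\mathbf{a}|$ is nonnegative and sub-exponential so one-sided Bernstein together with a slightly enlarged net suffice. Given this, for any $X \in S_{k,2}$ with nearest net point $X'$ the triangle inequality bounds the perturbation $\tfrac{1}{m}\|\mathcal{A}(X-X')\|_1 \leq C''\epsilon$, which can be absorbed into the net-level estimate by choosing $\epsilon$ small; tuning $\epsilon$ and the Bernstein slack $t$ then yields the explicit constants $0.12$ and $2.45$. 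The main obstacle is the lower inequality: because $|\mathbf{a}^* X\mathbf{a}|$ vanishes on a set of positive probability whenever $\lambda_1\lambda_2 < 0$, no small-ball estimate pointing to a uniform lower tail is available, and the $\ell_1$ lower bound rests entirely on the fact that $\mu_-$ remains bounded away from zero after subtracting both the Bernstein slack and the $C''\epsilon$ perturbation. Tracking these constants carefully over all sign patterns of $(\lambda_1,\lambda_2)$ is therefore the quantitatively delicate part of the argument.
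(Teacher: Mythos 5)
Your proposal is correct and follows essentially the same route as the paper: eigendecomposition $X=\lambda_1\vu_1\vu_1^*+\lambda_2\vu_2\vu_2^*$, the observation that $\va^*X\va$ is a signed combination of independent exponentials, explicit two-sided bounds on $\mathbb{E}|\va^*X\va|$ (your tight constants $1/\sqrt{2}$ and $\sqrt{2}$ versus the paper's looser $0.57$ and $2$), Bernstein concentration, a covering number of size $\exp(O(k\log(n/k)))$ over the $\binom{n}{k}$ supports, and absorption of the net-approximation error. The only place you gloss is the uniform upper bound over the enlarged class (rank $\leq 4$, $2k$ rows), where a naive net argument regresses; the paper closes this by splitting $X-X_0$ into two orthogonal elements of the original class and solving the self-referential inequality $U_0\leq 2+\epsilon_0+\sqrt{2}U_0\epsilon$, which is the standard fix your sketch implicitly requires.
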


In the next theorem, we present the performance of (\ref{eq:complexL1}) with showing
that one can employ $\ell_1$ minimization to   stably recovery complex $k$-sparse
signals from the  phaseless measurements provided $\A$ satisfies restricted isometry
property of  order $(2,2ak)$ with a proper choice of $a>0$.

\begin{theorem}\label{thm: noise_constrained}
Assume that $\mathcal{A}(\cdot)$ satisfy the RIP condition of order $(2,2ak)$ with
RIP constant $c, C>0$ satisfying
\begin{equation}\label{eqn: condition_a}
c-\frac{4C}{\sqrt{a}}-\frac{C}{a}>0.
\end{equation}
For any $k$ sparse signals $\vx_0\in \C^n$,  the solution to (\ref{eq:complexL1})
$\vx^\#$ satisfies
\begin{equation}\label{eqn: conclusion1}
\|\vx^\#(\vx^\#)^* -\vx_0\vx_0^*\|_2\leq C_1\frac{2\epsilon}{\sqrt{m}},
\end{equation}
where
\[
C_{1}=\frac{\frac{1}{a}+\frac{4}{\sqrt{a}}+1}{c-\frac{4C}{\sqrt{a}}-\frac{C}{a}}.
\]
Furthermore, we have
\begin{equation}\label{eqn: conclusion2}
\min_{c\in \C, \abs{c}=1}\|c\cdot\vx^\# -\vx_0\|_2\leq \min\{ 2\sqrt{2}C_{1}\epsilon/ (\sqrt{m}\|\vx_0\|_2),
2\sqrt{2\sqrt{2}C_1}\cdot\sqrt{\epsilon}\cdot (n/m)^{1/4}\}.
\end{equation}
\end{theorem}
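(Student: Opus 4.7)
The plan is to lift \eqref{eq:complexL1} to rank-$\leq 2$ sparse Hermitian matrices and reduce to Theorem~\ref{thm: RIP}. Set $X^\#:=\vx^\#(\vx^\#)^*$, $X_0:=\vx_0\vx_0^*$, $H:=X^\#-X_0$, and $T:=\supp(\vx_0)$ with $|T|\leq k$. Feasibility of $\vx^\#$ together with $\vy=\A(\vx_0)+\mathbf{w}$, $\|\mathbf{w}\|_2\leq\epsilon$, gives $\|\A(H)\|_2\leq 2\epsilon$ and hence $\tfrac{1}{m}\|\A(H)\|_1\leq\tfrac{2\epsilon}{\sqrt m}$ by Cauchy--Schwarz. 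Squaring the $\ell_1$-optimality $\|\vx^\#\|_1\leq\|\vx_0\|_1$ gives $\|X^\#\|_1\leq\|X_0\|_1$ entrywise (using $\|\vx\vx^*\|_1=\|\vx\|_1^2$); the usual support-versus-complement manipulation on $T\times T$ then produces the cone condition
\[
\|H_{(T\times T)^c}\|_1 \,\leq\, \|H_{T\times T}\|_1 \,\leq\, k\,\|H_{T\times T}\|_F.
\]

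To expose the rank-$2$ sparse structure of $H$, I would use the representation $H=\vu\vv^*+\vv\vu^*$ with $\vu:=(\vx^\#+\vx_0)/\sqrt{2}$ and $\vv:=(\vx^\#-\vx_0)/\sqrt{2}$. Sort the indices of $T^c$ by decreasing $\ell_2$-norm of the corresponding columns of $H$ and partition them into blocks $S_1,S_2,\ldots$ of size $ak$; set $S_0:=T$. The diagonal pieces $P_{S_j}HP_{S_j}$ are automatically Hermitian, rank $\leq 2$, with row support $\leq ak$. For each off-diagonal pair I apply the polarization identity
\[
P_{S_j}HP_{S_l}+P_{S_l}HP_{S_j} \,=\, \tfrac{1}{2}\bigl(\mathbf{p}_j\mathbf{p}_l^{*}+\mathbf{p}_l\mathbf{p}_j^{*}\bigr)-\tfrac{1}{2}\bigl(\mathbf{q}_j\mathbf{q}_l^{*}+\mathbf{q}_l\mathbf{q}_j^{*}\bigr),
\]
where $\mathbf{p}_j:=P_{S_j}(\vu+\vv)$ and $\mathbf{q}_j:=P_{S_j}(\vu-\vv)$, splitting each cross block into two Hermitian rank-$\leq 2$ matrices with row support $\subseteq S_j\cup S_l$ of size $\leq 2ak$. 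Every piece in the decomposition therefore falls in the class on which Theorem~\ref{thm: RIP} applies. I expect this to be the main technical obstacle: a naive block decomposition would leave the off-diagonal pieces with rank $4$ and force RIP of order $(4,2ak)$; the polarization identity above is what keeps the rank budget at $2$.

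With this decomposition in hand, apply the RIP lower bound to $H_0:=P_THP_T$ and the RIP upper bound to every other block:
\[
c\|H_0\|_F \,\leq\, \tfrac{1}{m}\|\A(H_0)\|_1 \,\leq\, \tfrac{1}{m}\|\A(H)\|_1+\tfrac{1}{m}\sum\|\A(\text{off-block})\|_1.
\]
Because the $S_j$'s are sorted by column $\ell_2$-norm, tube-style tail estimates control the total off-block Frobenius mass by the cone-condition right-hand side divided by $\sqrt{ak}$; combined with $\|H_{T\times T}\|_1\leq k\|H\|_F$ this turns the tail into $\bigl(\tfrac{4}{\sqrt a}+\tfrac{1}{a}\bigr)C\|H\|_F$ (the ``$4$'' accounts for the four families of off-blocks: the $T\leftrightarrow T^c$ and $T^c\leftrightarrow T^c$ crosses together with their Hermitian mirrors). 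Rearranging produces $\bigl(c-\tfrac{4C}{\sqrt a}-\tfrac{C}{a}\bigr)\|H\|_F\leq\bigl(1+\tfrac{4}{\sqrt a}+\tfrac{1}{a}\bigr)\tfrac{2\epsilon}{\sqrt m}$, which under hypothesis \eqref{eqn: condition_a} is exactly \eqref{eqn: conclusion1}.

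Finally, for \eqref{eqn: conclusion2}, set $a:=\|\vx^\#\|_2$, $b:=\|\vx_0\|_2$, $r:=|\innerp{\vx^\#,\vx_0}|$, and $d:=\min_{|c|=1}\|c\vx^\#-\vx_0\|_2$, so that $d^2=a^2+b^2-2r$ and $\|H\|_F^2=(a-b)^2(a+b)^2+2(ab-r)(ab+r)$. A short algebraic verification (treat $2\|H\|_F^2-b^2d^2$ as a concave quadratic in $r$ on $[0,ab]$ and check nonnegativity at both endpoints, factoring $2t^4-5t^2+2t+1=(t-1)^2(2t^2+4t+1)$ at $r=ab$ with $t=a/b$) gives $b^2d^2\leq 2\|H\|_F^2$, and plugging in \eqref{eqn: conclusion1} yields the first estimate $d\leq 2\sqrt{2}\,C_1\epsilon/(\sqrt m\,\|\vx_0\|_2)$. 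For the second estimate, combine $d^2\leq 2\|H\|_F^2/b^2$ with $d^2\leq 2(a^2+b^2)\leq 4b^2+2|\tr(H)|\leq 4b^2+2\sqrt n\,\|H\|_F$ (using $|\tr(H)|\leq\sqrt n\,\|H\|_F$) and take the worst case over $b$; at the crossover one obtains $d^2\lesssim \sqrt n\,\|H\|_F$, which with \eqref{eqn: conclusion1} gives the $\sqrt\epsilon\,(n/m)^{1/4}$ bound.
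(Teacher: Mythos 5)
Your overall architecture matches the paper's: lift to Hermitian rank-$\le 2$ row-sparse matrices, extract a cone condition from $\|X^\#\|_1\le\|X_0\|_1$, decompose $H$ into blocks lying in the RIP class, and convert the Frobenius bound on $H$ into a bound on $\min_{|c|=1}\|c\vx^\#-\vx_0\|_2$. Your endgame is sound: the identity $d^2=\|\vx^\#\|_2^2+\|\vx_0\|_2^2-2|\innerp{\vx^\#,\vx_0}|$ together with the concave-in-$r$ endpoint check is precisely Lemma \ref{lem: MatrixToVector}, and the two-branch minimization over $\|\vx_0\|_2$ is a legitimate variant of the paper's route to the $\sqrt{\epsilon}\,(n/m)^{1/4}$ bound (the paper uses $\|\vx^\#\|_2\le\|\vx^\#\|_1\le\|\vx_0\|_1\le\sqrt{n}\|\vx_0\|_2$ rather than the trace). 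Your polarization identity is algebraically correct but degenerate here: with $\vu=(\vx^\#+\vx_0)/\sqrt2$ and $\vv=(\vx^\#-\vx_0)/\sqrt2$ one has $\mathbf{p}=\sqrt2\,\vx^\#$ and $\mathbf{q}=\sqrt2\,\vx_0$, so $P_{S_j}\mathbf{q}=0$ for every $j\ge1$ and each block of $H$ outside $T\times T$ is just a block of the rank-one matrix $\vx^\#(\vx^\#)^*$. The rank-$4$ obstacle you flag never arises; this is exactly the observation the paper exploits.

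The genuine gap is in the head/tail bookkeeping for \eqref{eqn: conclusion1}. You apply the RIP lower bound only to $H_0:=P_THP_T$ and push \emph{every} other block, including those involving $S_1$, into the tail. But the sorted-tail estimate $\|\vx^\#_{S_j}\|_2\le\|\vx^\#_{S_{j-1}}\|_1/\sqrt{ak}$ is available only for $j\ge2$; nothing of the form $\|\vx^\#_{T^c}\|_1/\sqrt{ak}$ controls $\|\vx^\#_{S_1}\|_2$. The best generic bound for the block $S_1\times S_1$ is $\|H_{S_1,S_1}\|_F=\|\vx^\#_{S_1}\|_2^2\le\|H_{S_1,S_1}\|_1\le\|H_{T\times T}\|_1\le k\|H_{T\times T}\|_F$, which carries a factor $k$ rather than $1/a$ and destroys the estimate (take $\vx^\#$ supported on $ak$ coordinates disjoint from $T$: then $C\|H_{S_1,S_1}\|_F$ can be of order $Ck\|H_0\|_F$ and swamps $c\|H_0\|_F$); the cross blocks $T\times S_1$ suffer the same defect. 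Relatedly, your final rearrangement silently replaces $\|H_0\|_F$ by $\|H\|_F$ on the left-hand side, which goes the wrong way. The paper's remedy for both problems is to enlarge the head to $T_{01}:=T\cup S_1$ (support $\le k+ak\le 2ak$ --- this is exactly why RIP of order $(2,2ak)$ is assumed), apply the RIP lower bound to $\bar H:=H_{T_{01},T_{01}}$, start the tail at $S_2$, and separately prove $\|H\|_F\le(1+4/\sqrt a+1/a)\|\bar H\|_F$ using Lemma \ref{lem: MatrixToVector} in the form $\|\vx^\#_{T_{01}}\|_2\|\vx^\#_{T_{01}}-\vx_0\|_2\le\sqrt2\|\bar H\|_F$. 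Once the head is so enlarged, your finer $S_j\times S_l$ splitting of the far corner would in fact recover the $C\|\bar H\|_F/a$ term without the convex sparse decomposition lemma (Lemma \ref{eqn: TonyCai}) that the paper invokes; but as written your argument does not close.
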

 According to  Theorem \ref{thm: RIP},  if $\va_j, j=1,\ldots,m$ are complex Gaussian random vectors,
 then  $\A$ satisfies RIP of order $(2,2ak)$
with constants $c=0.12$ and $C=2.45$ with high probability provided $m\gtrsim 2ak
\log(n/2ak)$. To guarantee (\ref{eqn: condition_a}) holds, it is enough to require
$a>(8C/c)^2$. Therefore, combining Theorem \ref{thm: RIP} and Theorem \ref{thm:
noise_constrained} with $\epsilon=0$, we can obtain the following corollary:
\begin{corollary}
Suppose that $\vx_0\in \C^n$ is a $k$-sparse signal.
Assume that $A=(\va_1,\ldots,\va_m)^T$ where $\va_j, j=1,\ldots,m$ is Gaussian random vectors, i.e.,  $\va_j \sim\mathcal{N}(0, \frac{1}{2}\mathbf{I}_{n\times n})+\mathcal{N}(0,\frac{1}{2}\mathbf{I}_{n\times n})i$. If
$ m\gtrsim k\log(n/k)$, then
\[
\argmin{\vx\in \C^n}\{\|\vx\|_1: \abs{A\vx}=\abs{A\vx_0}\}\,\,=\,\,\tilde{\vx}_0
\]
holds under the probability at least $1-2\exp(-c_0m)$. Here $c_0>0$ is an absolute
constant.

\end{corollary}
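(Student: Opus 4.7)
The plan is to obtain the corollary as a direct consequence of Theorem \ref{thm: RIP} and Theorem \ref{thm: noise_constrained} in the noiseless regime $\epsilon = 0$. First I would observe that the constraint $|A\vx| = |A\vx_0|$ is exactly the noiseless form of $\|\A(\vx) - \A(\vx_0)\|_2 \leq \epsilon$ with $\epsilon = 0$, so the program in the corollary coincides with the program (\ref{eq:complexL1}) in that regime. Consequently, it suffices to check that the RIP-based guarantee of Theorem \ref{thm: noise_constrained} applies with high probability under the stated sampling complexity.

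Next I would pick a universal constant $a$ large enough that the non-degeneracy condition (\ref{eqn: condition_a}) is satisfied for the numerical RIP constants $c = 0.12$ and $C = 2.45$ produced by Theorem \ref{thm: RIP}. As the authors already remark preceding the corollary, taking $a > (8C/c)^2$ suffices. With $a$ fixed as a constant, Theorem \ref{thm: RIP} applied at order $(2, 2ak)$ shows that with probability at least $1 - 2\exp(-c_0 m)$ the map $\A$ obeys
\[
0.12\,\|X\|_F \,\leq\, \tfrac{1}{m}\|\A(X)\|_1 \,\leq\, 2.45\,\|X\|_F
\]
for all Hermitian $X$ with $\rank(X) \leq 2$ and $\|X\|_{0,2} \leq 2ak$, whenever $m \gtrsim 2ak \log(n/(2ak))$. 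Since $a$ is a constant, this sample complexity is of the form $m \gtrsim k \log(n/k)$, which matches the hypothesis of the corollary.

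Finally I would feed this RIP guarantee into Theorem \ref{thm: noise_constrained} with $\epsilon = 0$. The bound (\ref{eqn: conclusion1}) then collapses to $\|\vx^\#(\vx^\#)^* - \vx_0 \vx_0^*\|_2 = 0$, so that the two rank-one Hermitian matrices $\vx^\#(\vx^\#)^*$ and $\vx_0 \vx_0^*$ coincide. Because a rank-one Hermitian matrix determines its generating vector up to a global unimodular phase, this forces $\vx^\# = c\vx_0$ for some $c \in \C$ with $|c| = 1$, i.e. $\vx^\# \in \tilde{\vx}_0$. The proof is essentially a bookkeeping exercise; the only non-trivial checks are matching the RIP order $(2, 2ak)$ demanded by Theorem \ref{thm: noise_constrained} with the one delivered by Theorem \ref{thm: RIP}, and making sure that fixing $a$ as a universal constant does not degrade the target rate $m \gtrsim k \log(n/k)$. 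Both are routine, so I do not anticipate a real obstacle here.
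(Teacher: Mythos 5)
Your proposal is correct and follows exactly the route the paper itself takes: instantiate Theorem \ref{thm: RIP} at order $(2,2ak)$ with $a>(8C/c)^2$ a fixed constant, feed the resulting RIP constants $c=0.12$, $C=2.45$ into Theorem \ref{thm: noise_constrained} with $\epsilon=0$, and conclude $\vx^{\#}(\vx^{\#})^{*}=\vx_0\vx_0^{*}$, hence $\vx^{\#}\in\tilde{\vx}_0$. The only addition beyond the paper's one-paragraph derivation is your explicit remark that a rank-one Hermitian matrix determines its generating vector up to a unimodular phase, which the paper leaves implicit.
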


\section{Proof of Theorem \ref{thm: RIP}}

 We first introduce Bernstein-type inequality which
plays a key role in our proof.
\begin{lemma}\cite{V10}
\label{Bernstein inequality} Let $\xi_1,\ldots,\xi_m$ be i.i.d. sub-exponential
random variables and $K:=\max_j \|\xi_j\|_{\psi_1}$. Then for every $\epsilon>0$, we
have
\[
\mathbb{P}\left(\left|\frac{1}{m}\sum_{j=1}^m\xi_j-\frac{1}{m}\mathbb{E}(\sum_{j=1}^m\xi_j)\right|\geq \epsilon\right)\leq 2\exp\left(-c_0 m\min\left(\frac{\epsilon^2}{K^2},\frac{\epsilon}{K}\right)\right),
\]
where $c_0>0$ is an absolute constant.
\end{lemma}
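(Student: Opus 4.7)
The plan is to prove this Bernstein-type tail bound via the classical Chernoff--Cram\'er (exponential moments) method, specialized to sub-exponential variables. Write $\zeta_j := \xi_j - \mathbb{E}\xi_j$ and $S_m := \sum_{j=1}^m \zeta_j$, so that the event in question is $\{|S_m| \geq m\epsilon\}$. Since the $\xi_j$ are i.i.d.\ with $\|\xi_j\|_{\psi_1} \leq K$, the centered variables $\zeta_j$ remain sub-exponential with $\|\zeta_j\|_{\psi_1} \leq 2K$, by the triangle inequality for $\|\cdot\|_{\psi_1}$ together with the trivial bound $\|\mathbb{E}\xi_j\|_{\psi_1} \lesssim |\mathbb{E}\xi_j| \leq \|\xi_j\|_{\psi_1}$.

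The technical core is a moment generating function estimate for centered sub-exponentials: there exist absolute constants $c_1, c_2 > 0$ such that every centered $\zeta$ with $\|\zeta\|_{\psi_1} \leq K'$ satisfies
\begin{equation*}
\mathbb{E}\exp(\lambda \zeta) \,\leq\, \exp\bigl(c_1 K'^{\,2}\lambda^2\bigr) \qquad \text{whenever } |\lambda| \leq \frac{1}{c_2 K'}.
\end{equation*}
I would prove this by Taylor-expanding $e^{\lambda\zeta}$, discarding the linear term via $\mathbb{E}\zeta = 0$, and then invoking the equivalent moment characterization $\mathbb{E}|\zeta|^p \leq (C_0 K' p)^p$ for every integer $p \geq 1$ (this equivalence with the $\psi_1$-norm follows by Markov's inequality in one direction and Stirling in the other). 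Using $p! \leq p^p$, the tail of the Taylor series becomes geometric in $\lambda K'$ as soon as $|\lambda| K'$ is sufficiently small, which gives the claimed quadratic bound on the log-MGF.

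With this estimate applied to each $\zeta_j$ (with $K' = 2K$), independence plus Markov's inequality gives, for every $0 \leq \lambda \leq 1/(2c_2 K)$,
\begin{equation*}
\mathbb{P}(S_m \geq m\epsilon) \,\leq\, e^{-\lambda m \epsilon} \prod_{j=1}^m \mathbb{E} e^{\lambda \zeta_j} \,\leq\, \exp\bigl(-\lambda m \epsilon + 4c_1 K^2 m \lambda^2\bigr).
\end{equation*}
Optimizing, the unconstrained minimizer is $\lambda^\star = \epsilon/(8 c_1 K^2)$. I would then split into two regimes: if $\lambda^\star$ lies in the admissible interval (equivalently, $\epsilon \lesssim K$), I choose $\lambda = \lambda^\star$ to obtain the sub-Gaussian bound $\exp\bigl(-c \, m\epsilon^2 / K^2\bigr)$; otherwise I take $\lambda = 1/(2c_2 K)$ at the boundary, and the linear term dominates the quadratic term, yielding $\exp\bigl(-c' \, m\epsilon/K\bigr)$. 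The two cases combine to a single bound of the form $\exp\bigl(-c_0 m \min(\epsilon^2/K^2,\, \epsilon/K)\bigr)$ with $c_0>0$ absolute.

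Finally, applying the same argument to $-\zeta_j$ produces an identical lower-tail bound, and a union bound over the two tails introduces the factor of $2$ in the statement. I expect the only real obstacle to be the MGF estimate: one has to be careful that the passage from $\|\cdot\|_{\psi_1}$ to the quadratic log-MGF bound uses only \emph{absolute} constants (no dependence on the distribution beyond $K$), since the whole usefulness of the lemma downstream hinges on $c_0$ being universal. Once that step is carried out cleanly, the Chernoff optimization and union bound are routine.
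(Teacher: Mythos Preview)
Your proof outline is correct and is essentially the standard Chernoff--Cram\'er argument for Bernstein's inequality; this is exactly the route taken in the cited reference \cite{V10}. Note, however, that the paper does not supply its own proof of this lemma at all---it is simply quoted from Vershynin with a citation---so there is nothing in the paper to compare against beyond the reference itself.
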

We next  introduce some key lemmas needed to prove Theorem \ref{thm: RIP}, and then
present the proof of Theorem \ref{thm: RIP}.

\begin{lemma}
\label{lem: expectation} If $z_1$, $z_2$, $z_3$ and $z_4$ are independently drawn
from $\mathcal{N}(0,1)$. When $t\in [-1,0]$, we have
\[
\mathbb{E}|z_1^2+z_2^2+tz_3^2+tz_4^2|=2\left(\frac{1+t^2}{1-t}\right).
\]
\end{lemma}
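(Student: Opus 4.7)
The plan is to reduce the four-dimensional Gaussian expectation to a two-dimensional expectation over a pair of independent exponential random variables, and then evaluate the resulting double integral directly.

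First, I would observe that $U := z_1^2 + z_2^2$ and $V := z_3^2 + z_4^2$ are independent and each follows a $\chi^2_2$ distribution, i.e., each is exponential with mean $2$, having density $\tfrac12 e^{-u/2}$ on $[0,\infty)$. Setting $s := -t \in [0,1]$, the target quantity becomes
\[
\mathbb{E}|z_1^2+z_2^2+tz_3^2+tz_4^2| \;=\; \mathbb{E}|U - sV|.
\]
So the whole lemma reduces to showing that for two i.i.d.\ exponential random variables $U,V$ with mean $2$ and any $s\in[0,1]$,
\[
\mathbb{E}|U-sV| \;=\; \frac{2(1+s^2)}{1+s}.
\]

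Next I would compute this by splitting the expectation at $U = sV$, namely
\[
\mathbb{E}|U-sV| \;=\; \int_0^\infty\!\!\int_{sv}^\infty (u-sv)\,\tfrac14 e^{-(u+v)/2}\,du\,dv \;+\; \int_0^\infty\!\!\int_0^{sv}(sv-u)\,\tfrac14 e^{-(u+v)/2}\,du\,dv.
\]
For the first piece, the substitution $w = u - sv$ (at fixed $v$) turns the inner integral into $e^{-sv/2}\cdot \mathbb{E}[U] = 2e^{-sv/2}$, and the remaining $v$-integral is the Laplace transform of an exponential, giving $2/(1+s)$. For the second piece, an integration by parts yields $sv + 2e^{-sv/2} - 2$ as the inner integral, and integrating against $\tfrac12 e^{-v/2}$ gives $2s + 2/(1+s) - 2$. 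Summing the two contributions produces
\[
\frac{4}{1+s} + 2s - 2 \;=\; \frac{2(1+s^2)}{1+s},
\]
after which substituting back $s = -t$ recovers the claimed value $2(1+t^2)/(1-t)$.

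There is no real obstacle here beyond bookkeeping: the main care point is making sure the casework at $u=sv$ is handled correctly (so the absolute value is removed with the right sign in each region) and that the boundary term in the integration by parts for the second piece is not dropped. As a sanity check I would verify the endpoints: at $t=0$ the formula gives $2 = \mathbb{E}[z_1^2+z_2^2]$, and at $t=-1$ (i.e., $s=1$) it gives $2$, which matches the known value $\mathbb{E}|U-V| = \mathbb{E}[U] = 2$ for two i.i.d.\ exponentials of mean $2$. Alternatively, one could quote the general identity $\mathbb{E}|X-Y| = 1/\lambda_1 + 1/\lambda_2 - 2/(\lambda_1+\lambda_2)$ for independent exponentials with rates $\lambda_1,\lambda_2$ (applied to $U$ with rate $1/2$ and $sV$ with rate $1/(2s)$) to arrive at the same expression without the casework.
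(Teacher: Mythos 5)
Your proof is correct and is essentially the paper's argument in probabilistic dress: the paper's polar-coordinate substitution reduces the expectation to a two-dimensional integral over $\rho_1^2=z_1^2+z_2^2$ and $\rho_2^2=z_3^2+z_4^2$, which are exactly your exponential variables $U$ and $V$, and both computations then split the absolute value at $U=sV$ and integrate by parts. Your closed-form identity $\mathbb{E}|X-Y|=\tfrac{1}{\lambda_1}+\tfrac{1}{\lambda_2}-\tfrac{2}{\lambda_1+\lambda_2}$ for independent exponentials is a clean shortcut the paper does not invoke, but the underlying calculation is the same.
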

\begin{proof}
When $t=0$, we have
$\mathbb{E}|z_1^2+z_2^2+tz_3^2+tz_4^2|=\mathbb{E}|z_1^2+z_2^2|=2$. We consider  the
case where $-1\leq t<0$.
 Taking coordinates transformation as $z_{1}=\rho_{1}\cos\theta$,
$z_{2}=\rho_{1}\sin\theta$, $z_{3}=\rho_{2}\cos\phi$, and $z_{4}=\rho_{2}\sin\phi$,
we obtain that
\begin{align*}
\mathbb{E}|z_{1}^{2}+z_{2}^{2}+tz_{3}^{2}+tz_{4}^{2}| &
=\left(\frac{1}{2\pi}\right)^{2}\int_{{\mathbb R}^4}|z_{1}^{2}+z_{2}^{2}+tz_{3}^{2}+tz_{4}^{2}|\exp\left(-\frac{z_{1}^{2}+z_{2}^{2}+z_{3}^{2}+z_{4}^{2}}{2}\right)dz_{1}dz_{2}dz_{3}dz_{4}\\
 & =\left(\frac{1}{2\pi}\right)^{2}\int_{0}^{2\pi}d\theta\int_{0}^{2\pi}d\phi\int_{0}^{\infty}\int_{0}^{\infty}\rho_{1}\rho_{2}|\rho_{1}^{2}+t\rho_{2}^{2}|\exp\left(-\frac{\rho_{1}^{2}+\rho_{2}^{2}}{2}\right)d\rho_{1}d\rho_{2}\\
 & =\int_{0}^{\infty}\int_{0}^{\infty}\rho_{1}\rho_{2}|\rho_{1}^{2}+t\rho_{2}^{2}|\exp\left(-\frac{\rho_{1}^{2}+\rho_{2}^{2}}{2}\right)d\rho_{1}d\rho_{2}\\
 & =\int_{\rho_{1}>\sqrt{-t}\rho_{2}}\rho_{1}\rho_{2}(\rho_{1}^{2}+t\rho_{2}^{2})\exp\left(-\frac{\rho_{1}^{2}+\rho_{2}^{2}}{2}\right)d\rho_{1}d\rho_{2}\\
 &\quad +\int_{\rho_{1}\leq\sqrt{-t}\rho_{2}}\rho_{1}\rho_{2}\left(-t\rho_{2}^{2}-\rho_{1}^{2}\right)\exp\left(-\frac{\rho_{1}^{2}+\rho_{2}^{2}}{2}\right)d\rho_{1}d\rho_{2}\\
 &= \frac{2}{1-t}+\frac{2t^{2}}{1-t}=\frac{2(1+t^{2})}{1-t}.
\end{align*}
Here, we evaluate the last integrals using the integration by parts.
 One can also use {\tt Maple } to check the integrals.

\end{proof}

\begin{lemma}
\label{lem: covering number} Set
\[
\mathcal{X}:=\{X\in \mathbb{H}^{n\times n}\ |\ \|X\|_F=1,\ \text{rank}(X)\leq 2,\ \|X\|_{0,2}\leq k\}
\]
which is equipped with Frobenius norm.   The covering number of ${\mathcal X}$ at
scale $\epsilon>0$ is less than or equal to {{$ \left(\frac{9\sqrt{2}en}{\epsilon
k}\right)^{4k+2}$}}.
\end{lemma}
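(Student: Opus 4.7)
The plan is to cover $\mathcal{X}$ in a hierarchical way: first net the row-support pattern of $X$, then for each fixed support net the low-dimensional real-analytic manifold of rank-$\leq 2$ Hermitian matrices of unit Frobenius norm sitting inside it. Since $\|X\|_{0,2}\le k$, every $X\in\mathcal{X}$ is supported inside some $k$-element set $S\subseteq[n]$ (enlarging a smaller support by trivial zero rows), and there are at most $\binom{n}{k}\le(en/k)^{k}$ choices of $S$. It therefore suffices to cover, uniformly in $S$, the slice
\[
\mathcal{X}_S:=\{X\in\mathbb{H}^{S\times S}\ :\ \rank(X)\le2,\ \|X\|_F=1\}.
\]

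For each $X\in\mathcal{X}_S$, invoke the spectral theorem to write $X=\lambda_1 u_1u_1^{*}+\lambda_2 u_2u_2^{*}$ with $u_1,u_2$ orthonormal in $\C^{S}\cong\R^{2k}$ and $(\lambda_1,\lambda_2)\in\R^{2}$ on the unit circle (allowing $\lambda_2=0$ in the rank-one case). I would introduce a standard $\eta$-net $\mathcal{N}_\lambda$ on the unit circle of $\R^2$ of cardinality at most $(3/\eta)^{2}$, and an $\eta$-net $\mathcal{N}_u$ on the complex unit sphere of $\C^{S}$ of cardinality at most $(3/\eta)^{2k}$, applied independently to each eigenvector (ignoring, for the sake of a clean bound, the Stiefel orthogonality constraint). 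The candidate approximant is then $\widetilde{X}=\tilde{\lambda}_1\tilde u_1\tilde u_1^{*}+\tilde{\lambda}_2\tilde u_2\tilde u_2^{*}$.

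The analytic step is to bound $\|X-\widetilde{X}\|_F$. Splitting $X-\widetilde X$ into an eigenvalue piece and an eigenvector piece and using the identity $\langle u_1u_1^{*},u_2u_2^{*}\rangle_F=|\langle u_1,u_2\rangle|^{2}=0$ together with the standard estimate $\|vv^{*}-\tilde v\tilde v^{*}\|_F\le 2\|v-\tilde v\|_2$ for unit vectors, one obtains $\|X-\widetilde X\|_F\le C\eta$ for a small absolute constant $C$ (with $|\tilde\lambda_i|\le 1$ contributing a $\sqrt{2}$ through $|\tilde\lambda_1|+|\tilde\lambda_2|\le\sqrt{2}$). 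Multiplying the three cardinalities and summing over supports yields the bound
\[
\#\text{cover}(\mathcal{X},\eta)\ \le\ (en/k)^{k}\cdot(3/\eta)^{4k+2}\ \le\ \bigl(3en/(\eta k)\bigr)^{4k+2},
\]
after which rescaling $\eta\mapsto\epsilon$ by the constant $C$ absorbs it into the factor $9\sqrt{2}$, giving $(9\sqrt{2}en/(\epsilon k))^{4k+2}$.

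No deep obstacle is expected; the whole argument is standard covering bookkeeping. The only care needed is (i) to get the exponent right, tracking the $2k$ real dimensions contributed by each of the two eigenvector spheres (giving $4k$) plus the $2$ contributed by the eigenvalue circle (giving the final $4k+2$), and (ii) to tighten the Lipschitz constant in the triangle inequality $\|X-\widetilde X\|_F\le C\eta$ enough so that after rescaling $\eta$ one lands on the advertised constant $9\sqrt{2}$ rather than something larger; this is ultimately a calibration issue rather than a genuine analytic difficulty.
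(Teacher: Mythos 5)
Your argument is correct and follows essentially the same route as the paper: count the $\binom{n}{k}\le(en/k)^k$ supports, take an eigendecomposition, net the eigenvalue circle and the eigenvectors at scale $\eta$, and combine by a triangle inequality; the paper merely nets the orthonormal pair $(u_1,u_2)$ jointly as a point of a ball of radius $\sqrt{2}$ in real dimension $4k$ rather than as two independent unit vectors, which gives the same exponent $4k+2$ and essentially the same constant. The only caveat is that by ignoring the orthogonality constraint your net points $\widetilde{X}$ need not lie in $\mathcal{X}$ (they need not have unit Frobenius norm), so you obtain an external rather than internal cover --- harmless for the stated covering bound, and convertible to an internal net at the cost of a factor $2$ in $\epsilon$ if that is needed downstream.
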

\begin{proof}
 Note that
\[
\mathcal{X}=\{X\in \mathbb{H}^{n\times n}\ : \ X=U\Sigma U^*, \ \Sigma\in \Lambda,\ U\in \mathcal{U}\},
\]
where
\[
\Lambda=\{\Sigma\in \mathbb{R}^{2\times 2}\ : \ \Sigma=\diag(\lambda_1,\lambda_2),\ \lambda_1^2+\lambda_2^2=1\}
\]
and
\[
\mathcal{U}=\{U\in \mathbb{C}^{n\times 2}\ :\ U^*U=I,\ \|U\|_{0,2}\leq k\}
=\cup_{\# T =k}\CU_T.
\]
Here $T\subset \{1,\ldots,n\}$, and
\[
\CU_T:=\{U\in \mathbb{C}^{n\times 2}\ :\ U^*U=I,\ U=U_{T,:}\}.
\]
The $U_{T,:}\subset \C^{n\times 2}$ is a matrix obtained by keeping the rows of $U$
indexed by $T$ and setting the rows off the index set $T$ as $0$.
% According to Lemma 2.2 in \cite{MPT08}, there exists a set $\Lambda\subset {\mathbb S}^{n-1}$,
%which is an $\epsilon$ cover of the sphere ${\mathbb S}^{n-1}$ and $\# \Lambda\leq (1+2/\epsilon)^m$.
Note that $\|U\|_F=\sqrt{2}$ for all $U\in \mathcal{U}_T$ and  that the real
dimension of $\CU_T$ is at most $4k$ for any fixed support $T$ with $\# T= k$.
 We use $Q_T$ to denote $\epsilon/3$-net of $\CU_T$ with $\#Q_T\leq (9\sqrt{2}/\epsilon)^{4k}$.
 Then $Q_\epsilon:=\cup_{\# T=k}Q_{T}$ is a $\epsilon/3$-net of $\mathcal{U}$
 with
\[
\#Q_\epsilon\leq \left(\frac{en}{k}\right)^k\left(\frac{9\sqrt{2}}{\epsilon}\right)^{4k}\leq
\left(\frac{9\sqrt{2}en}{\epsilon k}\right)^{4k}.
\]
 We use $\Lambda_\epsilon$ to denote the $\epsilon/3$-net of $\Lambda$  with
$\#\Lambda_\epsilon\leq (9/\epsilon)^2$.

Set
\[
\mathcal{N}_\epsilon\,\,:=\,\,\{U\Sigma U^*\ |\ U\in Q_\epsilon,\ \text{and}\  \Sigma \in \Lambda_\epsilon\}.
\]
Then for any $X=U\Sigma U^*\in \mathcal{X}$, there exists $U_0\Sigma_0 U_0^*\in
\mathcal{N}_\epsilon$ with $\|U-U_0\|_F\leq \epsilon/3$ and
$\|\Sigma-\Sigma_0\|_F\leq \epsilon/3$. So, we have
\[
\begin{split}
\|U\Sigma U^*-U_0\Sigma_0 U_0^*\|_F&\leq \|U\Sigma U^*-U_0\Sigma U^*\|_F+\|U_0\Sigma U^*-U_0\Sigma_0 U^*\|_F+\|U_0\Sigma_0 U^*-U_0\Sigma_0 U_0^*\|_F\\
&\leq  \|U-U_0\|_F\|\Sigma U^*\|+\|U_0\|\|\Sigma-\Sigma_0\|_F\| U^*\|+\|U_0\Sigma_0\| \|U^*-U_0\|_F\\
&\leq \epsilon.
\end{split}
\]
Therefore, $\mathcal{N}_\epsilon$ is an $\epsilon$-net of $\mathcal{X}$ with
{{\[\#\mathcal{N}_\epsilon\leq \#\mathcal{Q}_\epsilon\cdot \#{\Lambda}_\epsilon\leq \left(\frac{9\sqrt{2}en}{\epsilon k}\right)^{4k}(9/\epsilon)^2\leq \left(\frac{9\sqrt{2}en}{\epsilon k}\right)^{4k+2}\]}}
provided with $n\geq k$ and $\epsilon\leq 1$.
\end{proof}
We now have the necessary ingredients to prove Theorem  \ref{thm: RIP}.
\begin{proof}[Proof of Theorem \ref{thm: RIP}]
Without loss of generality, we assume that  $\|X\|_F=1$. We first consider
$\mathbb{E}\|\mathcal{A}(X)\|_1$. Noting that $\text{rank}(X)\leq 2$ and $\|X\|_F=1$,
we can write $X$ in the form of
\[
X=\lambda_1\mathbf{u}_1\mathbf{u}_1^*+\lambda_2\mathbf{u}_2\mathbf{u}_2^*,
\]
where $\lambda_1,\lambda_2\in \R$ satisfying $\lambda_1^2+\lambda_2^2=1$ and
$\vu_1,\vu_2\in \C^n$ satisfying $\|\mathbf{u}_1\|_2=\|\mathbf{u}_2\|_2=1, \langle
\mathbf{u}_1,\mathbf{u}_2\rangle=0$. Therefore, we obtain that
\[
\va_k^*X\va_k=\lambda_1 \abs{\vu_1^*\va_k}^2+\lambda_2 \abs{\vu_2^*\va_k}^2,
\]
 where $\vu_1^*\va_k$ and $\vu_2^*\va_k$ are independently drawn from $\mathcal{N}(0,
\frac{1}{2})+\mathcal{N}(0,\frac{1}{2})i$.  Then
\begin{equation}\label{eq:AXxi}
\frac{1}{m}\|\mathcal{A}(X)\|_1=\frac{1}{m}\sum_{j=1}^m\left|\lambda_1 \abs{\vu_1^*\va_j}^2+\lambda_2
\abs{\vu_2^*\va_j}^2\right|=\frac{1}{m}\sum_{j=1}^m\xi_j,
\end{equation}
where the $\xi_j$ are independent copies of the following random variable
\[
\xi=\left|\lambda_1z_1^2+\lambda_1 z_2^2+\lambda_2 z_3^2+\lambda_2 z_4^2\right|
\]
where $z_1, z_2, z_3, z_4 \thicksim \mathcal{N}(0,\frac{1}{2})$ are independent.
Without loss of generality, we assume that $|\lambda_1|\geq |\lambda_2|$ and hence
$\abs{\lambda_1}\in [\frac{\sqrt{2}}{2},1]$. Note that  $\xi$ can also be rewritten
as
\begin{equation}\label{eq:xi}
\xi=|\lambda_1|\left|z_1^2+ z_2^2+t z_3^2+t z_4^2\right|
\end{equation}
with $t:=\lambda_2/\lambda_1$ satisfying $\abs{t}\leq 1$. Noting that
$\mathbb{E}\|\mathcal{A}(X)\|_1=\E(\xi)$, we next consider $\E (\xi)$. According to
(\ref{eq:xi}),  we have
\begin{equation}\label{eq:xiup}
\mathbb{E}(\xi)\leq \abs{\lambda_1} \mathbb{E} (z_1^2+z_2^2+z_3^2+z_4^2)\leq 2,
 \end{equation}
as $\mathbb{E}(z_j^2)=\frac{1}{2}$ for $j=1,\ldots,4$. On the other hand, when $t\geq
0$, we obtain that
\begin{equation}\label{eq:xilower1}
\mathbb{E}(\xi) \geq \abs{\lambda_1} \mathbb{E}(z_1^2+z_2^2)\geq \frac{\sqrt{2}}{2}.
\end{equation}
For $t\in [-1,0]$,  Lemma \ref{lem: expectation} shows that
\begin{equation}\label{eq:xilower}
\mathbb{E}(\xi)=\abs{\lambda_1}\left(\frac{1+t^2}{1-t}\right)\geq 0.57.
\end{equation}
%The graph of $\frac{1+t^2}{1-t}$ is established in Figure 1,
%\begin{figure}[htbp]
%\label{fig: expectation}
%\centering
%\includegraphics[height=8cm,width=10cm]{plot_expectation.png}
%\caption{Plot the function $f(t)=(1+t^2)/(1-t)$ with $t\in [-1,0]$.}
%\end{figure}
%and we find that $\mathbb{E}\xi\geq 0.57$ for all $t\in [-1,0]$.
 Combining (\ref{eq:xiup}), (\ref{eq:xilower1}) and (\ref{eq:xilower}), we obtain
 that
\[
0.57\,\,\leq\,\, \mathbb{E}(\xi)\,\,\leq\,\, 2.
\]

We next consider the bounds of $\|\A(X)\|_1$.
 Note that $\xi$ is a sub-exponential  variable with $\|\xi\|_{\psi_1}\leq \sum_{i=1}^4\|z_i\|_{\psi_1}\leq 4$.
 Here $\|x\|_{\psi_1}:=\sup_{p\geq 1} p^{-1}(\mathbb{E}|x|^p)^{1/p}$.
 To state conveniently, we set
\[
\mathcal{X}:=\{X\in \mathbb{H}^{n\times n}\ : \ \|X\|_F=1,\ \text{rank}(X)\leq 2,\ \|X\|_{0,2}\leq k\}.
\]

  We use $\mathcal{N}_{\epsilon}$ to denote an $\epsilon$-net of the set
${\mathcal X}$ respect to Frobenius norm $\|\cdot\|_F$, i.e. for any $X\in
\mathcal{X}$, there  exists $X_0\in \mathcal{N}_\epsilon$ such that $\|X-X_0\|_F\leq
\epsilon$.
 Based on Lemma \ref{Bernstein inequality} and equality (\ref{eq:AXxi}), we obtain that
\begin{equation}\label{eq:URIP}
0.57-\epsilon_0\,\,\leq\,\, \frac{1}{m}\|\mathcal{A}(X_0)\|_1\,\,\leq\,\, 2+\epsilon_0, \text{ for all } X_0\in \mathcal{N}_\epsilon
\end{equation}
holds with probability at least $1-2\cdot \# \mathcal{N}_\epsilon\cdot
\exp(-\frac{c_0}{16}m\epsilon_0^2)$.

  Assume  that $X\in \mathcal{X}$ with $X_0\in
\mathcal{N}_\epsilon$ such that $\|X-X_0\|_F\leq \epsilon$. Note that $\A$ is
continuous about $X\in \mathcal{X}$ and ${\mathcal X}$ is a compact set. We can set
\[
U_\A:=\max_{X\in {\mathcal X}} \frac{1}{m}\|\A(X)\|_1
\]
and
\[
U_0:=\max \{U_\A : \A \text{ satisfies } (\ref{eq:URIP})\}.
\]
 Note that ${\rm rank}(X-X_0)\leq 4$ and $\|X\|_{2,0}\leq k, \|X_0\|_{2,0}\leq k$.
We can decompose $X-X_0$ as $X-X_0=X_1+X_2$ where $X_1,X_2\in {\mathcal X}$ and
$\innerp{X_1,X_2}=0$ which leads to
\[
\begin{split}
\frac{1}{m}\|\mathcal{A}(X-X_0)\|_1&=\frac{1}{m}\|\mathcal{A}(X_1+X_2)\|_1\leq \frac{1}{m}\|\mathcal{A}(X_1)\|_1+\frac{1}{m}\|\mathcal{A}(X_2)\|_1\\
&\leq U_0\|X_1\|_F+U_0\|X_2\|_F\leq \sqrt{2}U_0\|X_1+X_2\|_F\leq \sqrt{2}U_0\epsilon.
\end{split}
\]
We obtain that
\begin{equation}\label{eq:upu0}
 \frac{1}{m}\|\mathcal{A}(X)\|_1\leq
\frac{1}{m}\|\mathcal{A}(X_0)\|_1+\frac{1}{m}\|\mathcal{A}(X-X_0)\|_1\leq
2+\epsilon_0+\sqrt{2}U_0\epsilon.
\end{equation}
According to the definition of $U_0$, (\ref{eq:upu0}) implies  $ U_0\leq
2+\epsilon_0+\sqrt{2}U_0\epsilon $ and hence
 which implies that
\[
U_0 \leq \frac{2+\epsilon_0}{1-\sqrt{2}\epsilon}.
\]
We also have
\[
\frac{1}{m}\|\mathcal{A}(X)\|_1\geq \frac{1}{m}\|\mathcal{A}(X_0)\|_1-\frac{1}{m}\|\mathcal{A}(X-X_0)\|_1\geq 0.57-\epsilon_0-\sqrt{2}U_0\epsilon
\geq 0.57-\epsilon_0-\sqrt{2}\frac{2+\epsilon_0}{1-\sqrt{2}\epsilon}\epsilon.
\]
 Hence, we obtain that the following holds with probability at
 least $1-2\cdot \# \mathcal{N}_\epsilon\cdot \exp(-\frac{c_0}{16}m\epsilon_0^2)$
\[
\left(0.57-\epsilon_0-\sqrt{2}\frac{2+\epsilon_0}{1-\sqrt{2}\epsilon}\epsilon\right)\|X\|_F\leq
 \frac{1}{m}\|\mathcal{A}(X)\|_1\leq \left(\frac{2+\epsilon_0}{1-\sqrt{2}\epsilon}\right)\|X\|_F, \text{ for all } X\in \mathcal{X}.
\]
 Taking $\epsilon=\epsilon_0=0.1$,  according to Lemma \ref{lem: covering number}, we obtain
 { {$\# \mathcal{N}_\epsilon\leq \left(\frac{90\sqrt{2}en}{k}\right)^{4k+2}$}.} Thus when $m\geq O(k\log(en/k))$, we obtain that
\[
0.12\|X\|_F\leq \frac{1}{m}\|\mathcal{A}(X)\|_1\leq 2.45\|X\|_F,\quad \text{ for all } X\in \mathcal{X}
\]
 holds with probability at least $1-2\exp(-cm)$.
\end{proof}

\section{Proof of Theorem \ref{thm: noise_constrained}}

We first introduce the  convex    $k$-sparse decomposition of signals which was
proved independently  in \cite{CZ14} and \cite{XX}.
\begin{lemma}\cite{CZ14,XX}\label{eqn: TonyCai}
Suppose that $\mathbf{v}\in \mathbb{R}^p$ satisfying $\|\mathbf{v}\|_\infty\leq
\theta$, $\|\mathbf{v}\|_1\leq s\theta$ where $\theta>0$ and $s\in \Z_+$. Then we
have
\[
\mathbf{v}=\sum_{i=1}^N\lambda_i\mathbf{u}_i,\qquad 0\leq \lambda_i\leq 1,\qquad \sum_{i=1}^N
\lambda_i=1,
\]
where $\mathbf{u}_i$ is $s$-sparse with $(\text{supp}(\mathbf{u}_i))\subset
\text{supp}(\mathbf{v})$, and
\[
\|\mathbf{u}_i\|_1\leq \|\mathbf{v}\|_1,\qquad \|\mathbf{u}_i\|_\infty\leq \theta.
\]

\end{lemma}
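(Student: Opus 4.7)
The plan is to realize the desired decomposition as the representation of $\mathbf{v}$ as a convex combination of vertices of a suitably chosen polytope, and then to argue by counting tight constraints that every vertex of that polytope is automatically $s$-sparse.

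First I would reduce to the nonnegative case. Let $T:=\supp(\mathbf{v})$ and record the sign pattern $\sigma_i\in\{+1,-1\}$ of $\mathbf{v}$ on $T$. Replacing each $v_i$ by $|v_i|$ leaves the hypotheses $\|\mathbf{v}\|_\infty\le\theta$ and $\|\mathbf{v}\|_1\le s\theta$ unchanged, and once an atomic decomposition is produced in the nonnegative case one reintroduces the signs coordinate-wise on each atom. This operation preserves support inclusion in $T$, sparsity, and both the $\ell_\infty$ and $\ell_1$ bounds, so it suffices to assume $v_i\ge 0$ throughout.

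Next, introduce the polytope
\[
P:=\bigl\{\mathbf{w}\in\R^p \,:\, w_i=0 \text{ for } i\notin T,\ 0\le w_i\le\theta \text{ for } i\in T,\ \textstyle\sum_{i\in T} w_i\le\|\mathbf{v}\|_1\bigr\}.
\]
The hypotheses place $\mathbf{v}$ inside $P$, and $P$ is compact, so by the Minkowski--Carath\'eodory theorem $\mathbf{v}=\sum_{i=1}^N\lambda_i\mathbf{u}_i$ with $\lambda_i\ge 0$, $\sum\lambda_i=1$, and each $\mathbf{u}_i$ an extreme point of $P$. Every such extreme point automatically satisfies $\supp(\mathbf{u}_i)\subseteq T$, $\|\mathbf{u}_i\|_\infty\le\theta$, and $\|\mathbf{u}_i\|_1\le\|\mathbf{v}\|_1$, so the only remaining property to verify is $s$-sparsity.

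The core step, which I expect to be the main obstacle, is this vertex characterization. Viewed in $\R^{|T|}$, the only defining inequalities of $P$ are the $2|T|$ box constraints $\{u_i\ge 0\}_{i\in T}$, $\{u_i\le\theta\}_{i\in T}$, together with the single global constraint $\sum_{i\in T}u_i\le\|\mathbf{v}\|_1$. At an extreme point $\mathbf{u}$, some $|T|$ linearly independent inequalities must be tight; because only one of them is global, at most one coordinate $u_{i^\star}$ can lie strictly between $0$ and $\theta$, while every other coordinate belongs to $\{0,\theta\}$. Letting $k$ denote the number of coordinates equal to $\theta$, the bound $k\theta+u_{i^\star}=\|\mathbf{u}\|_1\le\|\mathbf{v}\|_1\le s\theta$ forces $k\le s-1$ when $u_{i^\star}\in(0,\theta)$ and $k\le s$ otherwise. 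In either case $\|\mathbf{u}\|_0\le s$, and reinserting the signs yields the required decomposition. Everything apart from this vertex-counting step is routine linear algebra, so care in formulating the active-constraints argument is what the proof hinges on.
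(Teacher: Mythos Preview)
The paper does not supply its own proof of this lemma; it simply quotes the result from \cite{CZ14,XX} and uses it as a black box later in the proof of Theorem~1.3. So there is nothing in the paper to compare against line by line.

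Your argument is correct. The reduction to the nonnegative case is clean, the polytope $P$ is the right object, and the active-constraints count is sound: in $\R^{|T|}$ the only non-box inequality is the single sum constraint, so at a vertex at most one coordinate can avoid the endpoints $\{0,\theta\}$, and the counting $k\theta+u_{i^\star}\le\|\mathbf v\|_1\le s\theta$ then forces $s$-sparsity exactly as you wrote. Reintroducing the signs coordinate-wise preserves all four of the required properties. One cosmetic point: you might state explicitly that $\theta>0$ rules out both box constraints being tight on the same coordinate, which is what makes the ``at most one free coordinate'' count go through; you use this implicitly.

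For context, your route is essentially the polytope/Carath\'eodory argument of \cite{CZ14}. The companion reference \cite{XX} instead gives an explicit inductive construction of the atoms $\mathbf u_i$ (peeling off the $s$ largest entries, adjusting, and recursing), which yields in addition the equality $\|\mathbf u_i\|_1=\|\mathbf v\|_1$ rather than just the inequality. The constructive version can be handy if one ever needs that sharper $\ell_1$ control, but for the application in this paper your inequality is all that is used.
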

We also need the following lemma:
\begin{lemma}\label{lem: MatrixToVector}
If $\vx, \vy\in\mathbb{C}^{d}$, and $\innerp{\vx,\vy} \geq 0$, then
\[
\|\vx\vx^{*}-\vy\vy^{*}\|_{F}^{2}\geq\frac{1}{2}\|\vx\|_{2}^{2}\|\vx-\vy\|_{2}^{2}.
\]
Similarly, we have
\[
\|\vx\vx^{*}-\vy\vy^{*}\|_{F}^{2}\geq\frac{1}{2}\|\vy\|_{2}^{2}\|\vx-\vy\|_{2}^{2}.
\]
\end{lemma}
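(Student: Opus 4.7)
My plan is to reduce both inequalities of the lemma to a single polynomial inequality in three non-negative scalars $a := \|\vx\|_2$, $b := \|\vy\|_2$, and $\gamma := \langle \vx, \vy\rangle$. The hypothesis $\langle \vx, \vy\rangle \geq 0$ is what makes this reduction possible: it forces $\gamma$ to be a real number lying in $[0, ab]$ (the upper bound coming from Cauchy--Schwarz), so that both $\|\vx\vx^* - \vy\vy^*\|_F^2$ and $\|\vx - \vy\|_2^2$ become polynomials in $(a, b, \gamma)$ alone.

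I will first carry out the trace expansion
\[
\|\vx\vx^* - \vy\vy^*\|_F^2 = \|\vx\|_2^4 + \|\vy\|_2^4 - 2\,|\langle \vx, \vy\rangle|^2 = a^4 + b^4 - 2\gamma^2,
\]
together with $\|\vx - \vy\|_2^2 = a^2 + b^2 - 2\gamma$. Subtracting $\tfrac{1}{2}a^2 \|\vx - \vy\|_2^2$ from $\|\vx\vx^* - \vy\vy^*\|_F^2$ and clearing denominators, the first claim reduces to
\[
P(a, b, \gamma) := a^4 - a^2 b^2 + 2 b^4 + 2 a^2 \gamma - 4\gamma^2 \;\geq\; 0 \quad \text{for all } 0 \leq \gamma \leq ab.
\]

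The key observation is that, with $a$ and $b$ fixed, $P$ is a downward-opening parabola in $\gamma$, and hence concave, so its minimum over the closed interval $[0, ab]$ must be attained at one of the two endpoints. I will then check both:
\[
P(a, b, 0) = a^4 - a^2 b^2 + 2 b^4 = \bigl(a^2 - \tfrac{1}{2} b^2\bigr)^2 + \tfrac{7}{4} b^4 \;\geq\; 0,
\]
and, after expansion,
\[
P(a, b, ab) = a^4 + 2 a^3 b - 5 a^2 b^2 + 2 b^4 = (a - b)^2 \bigl(a^2 + 4 a b + 2 b^2\bigr) \;\geq\; 0.
\]
The double root at $a = b$ in the second factorization matches the equality case $\vx = \vy$, which is a useful sanity check. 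The second inequality of the lemma then follows by rerunning exactly the same argument with the roles of $\vx$ and $\vy$ interchanged.

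The only genuinely nontrivial step is spotting the factorization of $P(a, b, ab)$; the factor $(a - b)^2$ is forced by the equality case $\vx = \vy$, after which polynomial division produces the manifestly non-negative quadratic $a^2 + 4 a b + 2 b^2$. Everything else in the plan---the trace identity, the reduction to $P \geq 0$, and the concavity-plus-endpoint-check argument---is routine algebra.
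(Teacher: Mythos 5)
Your proposal is correct and follows essentially the same route as the paper: reduce to a polynomial in $a=\|\vx\|_2$, $b=\|\vy\|_2$, and the (real, non-negative) inner product, observe that the expression is a downward-opening parabola in that variable so the minimum over the Cauchy--Schwarz interval is attained at an endpoint, and verify both endpoints by the same two factorizations (your $P$ is exactly twice the paper's $h$, with $\gamma=abt$). The only difference is cosmetic: you make the concavity justification for the endpoint reduction explicit, which the paper leaves implicit.
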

\begin{proof}
To state conveniently, we set $a:=\|\vx\|_{2}, \ b:=\|\vy\|_{2}$ and $t:=\frac{\langle
\vx,\vy\rangle}{\|\vx\|_{2}\|\vy\|_{2}}$. A simple calculation shows that
\[
\|\vx\vx^{*}-\vy\vy^{*}\|_{F}^{2}-\frac{1}{2}\|\vx\|_{2}^{2}\|\vx-\vy\|_{2}^{2}\,\,=\,\,h(a,b,t)
\]
where
\[
h(a,b,t):=a^{4}+b^{4}-2(ab)^{2}t^{2}-\frac{1}{2}a^{2}(a^{2}+b^{2}-2abt).
\]
Hence, to this end, it is enough to show that  $h(a,b,t)\geq 0$ provided $a,b\geq 0$
and $0\leq t\leq 1$. For any fixed $a$ and $b$, $h(a,b,t)$ achieves the minimum for
either $t=0$ or $t=1$. For $t=0$, we have
\begin{equation}
h(a,b,0)=a^{4}+b^{4}-\frac{1}{2}a^{4}-\frac{1}{2}a^{2}b^{2}=\frac{1}{2}(a^{2}-\frac{1}{2}b^{2})^{2}+\frac{7}{8}b^{4}\geq0.\label{eq: tech_inequa1}
\end{equation}
When $t=1$, we have
\begin{equation}
\label{eq: tech_inequa2}
\begin{split}
h(a,b,1)&=a^{4}+b^{4}-\frac{1}{2}a^{2}(a^{2}+b^{2})-2(ab)^{2}+a^{3}b\\
&=(a-b)^2(\frac{1}{2}a^2+b^2+2ab)\geq 0
\end{split}
\end{equation}
Combining (\ref{eq: tech_inequa1}) and (\ref{eq: tech_inequa2}), we arrive at the
conclusion.
\end{proof}

Now we have enough ingredients to prove Theorem  \ref{thm: noise_constrained}.

\begin{proof}[Proof of Theorem \ref{thm: noise_constrained}]
 We assume that $\vx^\#$ is a solution to (\ref{eq:complexL1}).
Noting $\exp(i\theta)\vx^\#$ is also a solution to (\ref{eq:complexL1}) for any
$\theta\in \R$, without loss of generality, we  can assume that
\[
\innerp{\vx^\#,\vx_{0}}\in \R\qquad\text{and}\qquad\innerp{\vx^\#,\vx_{0}}\geq 0.
\]
We  consider the programming
\begin{equation}\label{eq: model_rank1}
\min_{X\in {\mathbb H}^{n\times n}} \|X\|_1\quad {s.t.}\quad \|\A(X)-\vy\|_2\leq \epsilon, \ {{{\rm rank}(X)=1}.}
\end{equation}
Then a simple observation  is that $X^{\#}$ is the solution to (\ref{eq: model_rank1}) if and only if $X^\#=\vx^\# (\vx^\#)^*$.

Set $X_0:=\vx_0\vx_0^*$ and $H:=X^\#-X_0=\vx^\#(\vx^\#)^{*}-\vx_{0}\vx_{0}^{*}$. To this end, it is enough to consider the upper bound of $\|H\|_F$.  Denote $T_{0}=\text{supp}(\vx_{0})$.
 Set $T_{1}$ as the index set which contains the indices of the $ak$ largest
 elements of  $\vx^\#_{T_{0}^{c}}$ in magnitude, and $T_2$ contains the indices of the next $ak$ largest elements, and so on. For simplicity, we set $T_{01}:=T_{0}\cup T_{1}$
and  $\bar{H}:=H_{T_{01},T_{01}}$, where $H_{S,T}$ denotes the
sub-matrix of $H$ with the row set $S$ and the column set $T$. Therefore, it is enough to consider $\|H\|_F$. We claim that
\begin{equation}\label{eq:mainH}
\|H\|_F\leq \|\bar{H}\|_F+\|H-\bar{H}\|_F\leq \left(\frac{1}{a}+\frac{4}{\sqrt{a}}+1\right)\|\bar{H}\|_F\leq \frac{\frac{1}{a}+\frac{4}{\sqrt{a}}+1}{c-\frac{4C}{\sqrt{a}}-\frac{C}{a}}\frac{2\epsilon}{\sqrt{m}},
\end{equation}
which implies the conclusion (\ref{eqn: conclusion1}). {{According to Lemma \ref{lem:
MatrixToVector}, we obtain that
\[
\min_{c\in \C, \abs{c}=1}\|c\cdot\vx^\# -\vx_0\|_2\leq \|\vx^\# -\vx_0\|_2\leq \sqrt{2}\|H\|_F/\|\vx_0\|_2\leq \frac{\frac{1}{a}+\frac{4}{\sqrt{a}}+1}{c-\frac{4C}{\sqrt{a}}-\frac{C}{a}}\frac{2\sqrt{2}\epsilon}{\sqrt{m}\|\vx_0\|_2}.
\]
Furthermore, we also have
\[
\min_{c\in \C, \abs{c}=1}\|c\cdot\vx^\# -\vx_0\|_2\leq \|\vx^\# -\vx_0\|_2\leq \|\vx^\#\|_2+\|\vx_0\|_2\leq \|\vx^\#\|_1+\|\vx_0\|_2\leq \|\vx_0\|_1+\|\vx_0\|_2.
\]
Here, we use $\|\vx^\#\|_1\leq \|\vx_0\|_1$. Combining the above two inequalities, we
obtain that
\[
\min_{c\in \C, \abs{c}=1}\|c\cdot\vx^\# -\vx_0\|_2\leq \min\left\{
 2\sqrt{2}C_1\frac{\epsilon}{\sqrt{m}\|\vx_0\|_2},\|\vx_0\|_2+\|\vx_0\|_1\right\}.
\]
For the case where $\|\vx_0\|_2+\|\vx_0\|_1\leq
2\sqrt{2}C_1\frac{\epsilon}{\sqrt{m}\|\vx_0\|_2}$, we obtain that
\begin{equation*}
\begin{aligned}
2\sqrt{2}C_1\frac{\epsilon}{\sqrt{m}}&\geq \|\vx_0\|_2^2 +\|\vx_0\|_2\|\vx_0\|_1\\
& \geq \|\vx_0\|_1^2/n+\|\vx_0\|_1^2/\sqrt{n}= \|\vx_0\|_1^2 (1/n+1/\sqrt{n}),
\end{aligned}
\end{equation*}
which implies $\|\vx_0\|_1 \leq \sqrt{2\sqrt{2} C_1}\cdot \sqrt{\epsilon}\cdot
{(n/m)}^{1/4}$. Hence, when $\|\vx_0\|_2+\|\vx_0\|_1\leq
2\sqrt{2}C_1\frac{\epsilon}{\sqrt{m}\|\vx_0\|_2}$, we have
\[
\|\vx_0\|_2+\|\vx_0\|_1 \leq 2\|\vx_0\|_1 \leq
2 \sqrt{2\sqrt{2} C_1}\cdot \sqrt{\epsilon}\cdot (n/m)^{1/4}.
\]
We arrive at the conclusion (\ref{eqn: conclusion2}).
 }}

We next turn to prove (\ref{eq:mainH}). The first inequality in (\ref{eq:mainH})
follows from
\begin{equation}\label{eq:main1}
\|H-\bar{H}\|_F\,\,\leq \,\, \left(\frac{1}{a}+\frac{4}{\sqrt{a}}\right) \|\bar{H}\|_F
\end{equation}
and the second inequality follows from
\begin{equation}\label{eq:main2}
\|\bar{H}\|_F\leq  \frac{1}{c-\frac{4C}{\sqrt{a}}-\frac{C}{a}}\frac{2\epsilon}{\sqrt{m}}.
\end{equation}
To this end, it is enough to prove (\ref{eq:main1}) and (\ref{eq:main2}).

%Then we have \[\mathcal{A}(H)\|_2\leq \|\mathcal{A}(X^{\text{opt}})-y\|_2+\|\mathcal{A}(x_0x_0^*)-y\|_2\leq 2\eta.\]

\textbf{Step 1}: We first present the proof of (\ref{eq:main1}).
A simple observation is that
\begin{equation}\label{eq:right1}
\begin{aligned}
\|H-\bar{H}\|_F&\leq \sum_{i\geq 2,j\geq 2}\|H_{T_i,T_j}\|_F+\sum_{i=0,1}\sum_{j\geq 2}\|H_{T_i,T_j}\|_F+\sum_{j=0,1}\sum_{i\geq 2}\|H_{T_i,T_j}\|_F\\
&= \sum_{i\geq 2,j\geq 2}\|H_{T_i,T_j}\|_F+2\sum_{i=0,1}\sum_{j\geq 2}\|H_{T_i,T_j}\|_F.
\end{aligned}
\end{equation}
We first consider the first term on the right side of (\ref{eq:right1}).
Note that
\begin{equation}\label{eq:term1}
\begin{aligned}
\sum_{i\geq2,j\geq2}\|H_{T_{i},T_{j}}\|_{F} & = \sum_{i\geq2,j\geq2}\|\vx^\#_{T_{i}}\|_{2}\cdot \|\vx^\#_{T_{j}}\|_{2}
=\left(\sum_{i\geq2}\|\vx^\#_{T_{i}}\|_{2}\right)^{2}\leq\frac{1}{ak}\|\vx^\#_{T_{0}^{c}}\|_{1}^{2}\\
&=\frac{1}{ak}\|H_{T_{0}^{c},T_{0}^{c}}\|_{1}
  \leq  \frac{1}{ak}\|H_{T_{0},T_{0}}\|_{1}\leq\frac{1}{a}\|H_{T_{0},T_{0}}\|_{F}\leq\frac{1}{a}\|\bar{H}\|_{F}.
 \end{aligned}
\end{equation}
Here, the first inequality follows from $\|\vx^\#_{T_i}\|_2\leq \|\vx^\#_{T_{i-1}}\|_1/\sqrt{ak}$, for $i\geq 2$.
The second inequality is based on $\|H-H_{T_{0},T_{0}}\|_{1}\leq \|H_{T_{0},T_{0}}\|_{1}$.
Indeed, according to $\|X^\#\|_1\leq \|X_0\|_1$, we have
\[
\|H-H_{T_{0},T_{0}}\|_{1}=\|X^\#-X^\#_{T_0,T_0}\|_1\leq \|X_0\|_1-\|X_{T_0,T_0}^\#\|_1\leq \|X_0-X_{T_0,T_0}^\#\|_1=\|H_{T_0,T_0}\|_1.
\]
We next turn to $\sum_{i=0,1}\sum_{j\geq 2}\|H_{T_i,T_j}\|_F$. When $i\in \{0,1\}$, noting that $\|\vx^\#_{T_j}\|_2\leq \|\vx^\#_{T_{j-1}}\|_1/\sqrt{ak}$, we have
\begin{equation}\label{eq:term3}
\begin{aligned}
\sum_{j\geq2}\|H_{T_{i},T_{j}}\|_{F}=\|\vx^\#_{T_{i}}\|_{2} \cdot\sum_{j\geq2}\|\vx^\#_{T_{j}}\|_{2}
\leq\frac{1}{\sqrt{ak}}\|\vx^\#_{T_{0}^{c}}\|_{1}\|\vx^\#_{T_{i}}\|_{2}
\leq \frac{1}{\sqrt{a}}\|\vx^\#_{T_{i}}\|_{2}\|\vx^\#_{T_{01}}-\vx_0\|_{2}.
\end{aligned}
\end{equation}
The last inequality is based on $\|\vx^\#\|_1\leq \|\vx_0\|_1$, which leads to
\[
\|\vx^\#_{T_{0}^c}\|_{1}\leq \|\vx_0\|_1-\|\vx^\#_{T_{0}}\|_1\leq \|\vx^\#_{T_{0}}-\vx_0\|_1\leq \sqrt{k}\|\vx^\#_{T_{0}}-\vx_0\|_2\leq \sqrt{k}\|\vx^\#_{T_{01}}-\vx_0\|_2.
\]
 Substituting (\ref{eq:term1}) and (\ref{eq:term3}) into (\ref{eq:right1}), we obtain that
\begin{equation}
\label{eqn: final1}
\begin{split}
\|H-\bar{H}\|_F &\leq \sum_{i\geq 2,j\geq 2}\|H_{T_i,T_j}\|_F+\sum_{i=0,1}\sum_{j\geq 2}\|H_{T_i,T_j}\|_F+\sum_{j=0,1}\sum_{i\geq 2}\|H_{T_i,T_j}\|_F\\
&\leq\frac{1}{a}\|\bar{H}\|_{F}+\frac{2\sqrt{2}}{\sqrt{a}}\|\vx^\#_{T_{01}}\|_{2}\|\vx^\#_{T_{01}}-\vx_{0}\|_{2}\leq \left(\frac{1}{a}+\frac{4}{\sqrt{a}}\right)\|\bar{H}\|_F.
\end{split}
\end{equation}
Here, the first inequality is based on $\|\vx^\#_{T_{0}}\|_{2}+\|\vx^\#_{T_{1}}\|_{2}\leq \sqrt{2}\|\vx^\#_{T_{01}}\|_{2}$, and the second inequality follows from Lemma \ref{lem: MatrixToVector}.

\textbf{Step 2}: We next prove (\ref{eq:main2}). Since  \[
 \|\A(H)\|_2\leq \|\mathcal{A}(X^{\#})-\vy\|_2+\|\mathcal{A}(X_0)-\vy\|_2\leq 2\epsilon,
 \]
 we have
 \begin{equation}
 \label{eqn: error estimation1}
 \frac{{2}\epsilon}{\sqrt{m}}\geq \frac{1}{\sqrt{m}} \|\A(H)\|_2\geq \frac{1}{m}\|\A(H)\|_1\geq \frac{1}{m}\|\A(\bar{H})\|_1-\frac{1}{m}\|\A(H-\bar{H})\|_1.
 \end{equation}
In order to get the lower bound of $\frac{1}{m}\|\A(\bar{H})\|_1-\frac{1}{m}\|\A(H-\bar{H})\|_1$,  we need to estimate the lower bound of $\frac{1}{m}\|\A(\bar{H})\|_1$ and the upper bound of $\frac{1}{m}\|\A(H-\bar{H})\|_1$. As $\text{rank}(\bar{H})\leq 2$ and $\|\bar{H}\|_{0,2}\leq 2k$,
 based on the RIP of measurement $\A$, we obtain that
\begin{equation}\label{eqn: lower_final1}
\frac{1}{m}\|\mathcal{A}(\bar{H})\|_1\geq c\|\bar{H}\|_F.
\end{equation}
We next consider an upper bound of $\frac{1}{m}\|\A(H-\bar{H})\|_1$. Since
$H-\bar{H}$ can be written as
\[
H-\bar{H}=(H_{T_{0},T_{01}^c}+H_{T_{01}^c,T_{0}})+(H_{T_{1},T_{01}^c}+H_{T_{01}^c,T_{1}})+H_{T_{01}^c,T_{01}^c},
\]
we have
\begin{equation}\label{mid: final}
\frac{1}{m}\|\A(H-\bar{H})\|_1\leq \frac{1}{m}\|\A(H_{T_{0},T_{01}^c}+H_{T_{01}^c,T_{0}})\|_1+\frac{1}{m}\|\A(H_{T_{1},T_{01}^c}+H_{T_{01}^c,T_{1}})\|_1+\frac{1}{m}\|\A(H_{T_{01}^c,T_{01}^c})\|_1.
\end{equation}
We next calculate the upper bound of
$\frac{1}{m}\|\A(H_{T_{i},T_{01}^c}+H_{T_{01}^c,T_{i}})\|_1, i\in \{0,1\}$, and
$\frac{1}{m}\|\A(H_{T_{01}^c,T_{01}^c})\|_1$, respectively. According to the RIP
condition, for $i\in \{0,1\}$, we have
\begin{equation}\label{eqn: mid1}
\begin{split}
\frac{1}{m}\|\A(H_{T_{i},T_{01}^c}+H_{T_{01}^c,T_{i}})\|_1&\leq \sum_{j\geq 2}\frac{1}{m}\|\A(H_{T_{i},T_j}+H_{T_{j},T_{i}})\|_1\leq \sum_{j\geq 2}C\|H_{T_{i},T_j}+H_{T_{j},T_{i}}\|_F\\
& \leq C \sum_{j\geq 2}(\|\vx_{T_{i}}^\#(\vx_{T_j}^\#)^*\|_F+\|\vx_{T_{j}}^\#(\vx_{T_{i}}^\#)^*\|_F)=2C\sum_{j\geq 2}\|\vx_{T_{i}}^\#\|_2\|\vx_{T_j}^\#\|_2\\
&\leq \frac{2C}{\sqrt{a}} \|\vx^\#_{T_{i}}\|_{2}\|\vx^\#_{T_{01}}-\vx_0\|_{2}.
\end{split}
\end{equation}
Here, the first inequality follows from
\[
H_{T_{i},T_{01}^c}+H_{T_{01}^c,T_{i}}=\sum_{j\geq 2}(H_{T_{i},T_j}+H_{T_{j},T_{i}})=\sum_{j\geq 2}(\vx_{T_{i}}^\#(\vx_{T_j}^\#)^*+\vx_{T_{j}}^\#(\vx_{T_{i}}^\#)^*)
\]
and the last inequality is obtained by (\ref{eq:term3}). Then it remains to estimate
the upper bound of $\frac{1}{m}\|\A(H_{T_{01}^c,T_{01}^c})\|_1$. Note that
\[
H_{T_{01}^c,T_{01}^c}=\vx_{T_{01}^c}^\#(\vx_{T_{01}^c}^\#)^*
\]
 with $\|\vx_{T_{01}^c}^\#\|_\infty\leq \|\vx_{T_{1}}^\#\|_1/(ak)$.
 Set $\theta:=\max\{\| \vx_{T_{1}}^\#\|_1/(ak),\|\vx_{T_{01}^c}^\#\|_1/(ak)\}$.
 We assume that  $\Phi:={\rm Diag}(Ph (\vx_{T_{01}^c}^\#))$ is the diagonal matrix
 with diagonal  elements being the phase of $\vx_{T_{01}^c}^\#$ and then
 $\Phi^{-1}\vx_{T_{01}^c}^\#$ is a real vector.
 According to Lemma \ref{eqn: TonyCai}, we have
\[
\Phi^{-1}\vx_{T_{01}^c}^\#=\sum_{i=1}^N\lambda_i\mathbf{u}_i,\qquad 0\leq \lambda_i\leq 1,\qquad \sum_{i=1}^N
\lambda_i=1,
\]
where $\mathbf{u}_i$ is $ak$-sparse, and
\[
\|\mathbf{u}_i\|_1\leq \|\vx_{T_{01}^c}^\#\|_1,\qquad \|\mathbf{u}_i\|_\infty\leq \theta,
\]
which leads to
\[
\|\mathbf{u}_i\|_2\leq \sqrt{\|\mathbf{u}_i\|_1\|\mathbf{u}_i\|_\infty}\leq \sqrt{\theta\|\vx_{T_{01}^c}^\#\|_1}.
\]
If $\theta=\|\vx_{T_{1}}^\#\|_1/(ak)$,  we have
\[
\begin{split}
\|\mathbf{u}_i\|_2&\leq \sqrt{\frac{\|\vx_{T_{1}}^\#\|_1\|\vx_{T_{01}^c}^\#\|_1}{ak}}= \sqrt{\frac{\|H_{T_1,T_{01}^c}\|_1}{ak}}\\
&\leq \sqrt{\frac{\|H-H_{T_0,T_0}\|_1}{ak}}\leq \sqrt{\frac{\|H_{T_0,T_0}\|_1}{ak}}\leq \sqrt{\frac{\|H_{T_0,T_0}\|_F}{a}}\leq \sqrt{\frac{\|\bar{H}\|_F}{a}}.\\
\end{split}
\]
If $\theta=\|\vx_{T_{01}^c}^\#\|_1/(ak)$,  we have
\[
\label{eqn: separate upper2}
\begin{split}
\|\mathbf{u}_i\|_2&\leq \sqrt{\frac{\|\vx_{T_{01}^c}^\#\|_1\|\vx_{T_{01}^c}^\#\|_1}{ak}}= \sqrt{\frac{\|H_{T_{01}^c,T_{01}^c}\|_1}{ak}}\\
&\leq \sqrt{\frac{\|H-H_{T_0,T_0}\|_1}{ak}}\leq \sqrt{\frac{\|H_{T_0,T_0}\|_1}{ak}}\leq \sqrt{\frac{\|H_{T_0,T_0}\|_F}{a}}\leq \sqrt{\frac{\|\bar{H}\|_F}{a}}.\\
\end{split}
 \]
 Thus we can obtain that
 \begin{equation}
 \label{eqn: upper_ui}\|\mathbf{u}_i\|_2\leq \sqrt{\frac{\|\bar{H}\|_F}{a}}, \text{ for } i=1,\ldots,N.
 \end{equation}
  Since
 \[
 \begin{aligned}
 H_{T_{01}^c,T_{01}^c}&=\vx_{T_{01}^c}^\#(\vx_{T_{01}^c}^\#)^*
 =\left(\sum_{i=1}^N\lambda_i\Phi\mathbf{u}_i\right)\left(\sum_{i=1}^N\lambda_i\Phi\mathbf{u}_i\right)^*\\
 &=\sum_{i<j}\lambda_i\lambda_j\Phi(\mathbf{u}_i\mathbf{u}_j^*+\mathbf{u}_j\mathbf{u}_i^*)\Phi^{-1}
 +\sum_{i}\lambda_i^2\Phi\mathbf{u}_i\mathbf{u}_i^*\Phi^{-1},
 \end{aligned}
 \]
 based on the RIP condition, we can obtain that
 \begin{equation}
 \label{eqn: mid2}
 \begin{split}
 \frac{1}{m}\|\A(H_{T_{01}^c,T_{01}^c})\|_1&\leq \sum_{i<j}C\lambda_i\lambda_j\|(\mathbf{u}_i\mathbf{u}_j^*+\mathbf{u}_j\mathbf{u}_i^*)\|_F+\sum_{i}C\lambda_i^2\|\mathbf{u}_i\mathbf{u}_i^*\|_F\\
 &\leq \sum_{i<j}2C\lambda_i\lambda_j\|\mathbf{u}_i\|_2\|\mathbf{u}_j\|_2+\sum_{i}C\lambda_i^2\|\mathbf{u}_i\|_2^2\\
 &\leq C\frac{\|\bar{H}\|_F}{a}\left(\sum_i\lambda_i\right)^2=C\frac{\|\bar{H}\|_F}{a}
 \end{split}
 \end{equation}
Here, the third line follows from (\ref{eqn: upper_ui}).  Now combing  (\ref{eqn:
mid1}) and (\ref{eqn: mid2}),  we obtain that
\begin{equation}
\label{eqn: lower_final2}
\begin{split}
\frac{1}{m}\|\A(H-\bar{H})\|_1&\leq \frac{1}{m}\|\A(H_{T_{0},T_{01}^c}+H_{T_{01}^c,T_{0}})\|_1+\frac{1}{m}\|\A(H_{T_{1},T_{01}^c}+H_{T_{01}^c,T_{1}})\|_1+\frac{1}{m}\|\A(H_{T_{01}^c,T_{01}^c})\|_1\\
&\leq \frac{2C}{\sqrt{a}} \|\vx^\#_{T_{0}}\|_{2}\|\vx^\#_{T_{01}}-\vx_0\|_{2}+\frac{2C}{\sqrt{a}} \|\vx^\#_{T_{1}}\|_{2}\|\vx^\#_{T_{01}}-\vx_0\|_{2}+C\frac{\|\bar{H}\|_F}{a}\\
&\leq \frac{2\sqrt{2}C}{\sqrt{a}} \|\vx^\#_{T_{01}}\|_{2}\|\vx^\#_{T_{01}}-\vx_0\|_{2}+C\frac{\|\bar{H}\|_F}{a}\\
&\leq C\left(\frac{4}{\sqrt{a}}+\frac{1}{a}\right)\|\bar{H}\|_F.
\end{split}
\end{equation}
 The last inequality uses Lemma \ref{lem: MatrixToVector}.
Based on  (\ref{eqn: lower_final1}), (\ref{eqn: lower_final2}) and (\ref{eqn: error
estimation1}), we obtain that
 \[
 \begin{split}
 \frac{{2}\epsilon}{\sqrt{m}}&\geq \frac{1}{m}\|\A(\bar{H})\|_1-\frac{1}{m}\|\A(H-\bar{H})\|_1\\
 &\geq c\|\bar{H}\|_F-C\left(\frac{4}{\sqrt{a}}+\frac{1}{a}\right)\|\bar{H}\|_F=\left(c-\frac{4C}{\sqrt{a}}-\frac{C}{a}\right)\|\bar{H}\|_F.
\end{split}
 \]
 According to the condition (\ref{eqn: condition_a}), it implies that
 \[
 \|\bar{H}\|_F\leq \frac{1}{c-\frac{4C}{\sqrt{a}}-\frac{C}{a}}\frac{{2}\epsilon}{\sqrt{m}}.
 \]
 Thus, we arrive at the conclusion (\ref{eq:main2}).
\end{proof}

\end{document}